\newcounter{alphthm}
\newtheorem{thm}{Theorem}
\newtheorem{cor}{Corollary}
\newtheorem{lem}{Lemma}
\newtheorem{rem}{Remark}
\newcommand{\be}{\begin{equation}}
\newcommand{\ee}{\end{equation}}
\newcommand{\ben}{\begin{enumerate}}
\newcommand{\een}{\end{enumerate}}
\newcommand{\pa}{{\partial}}
\newcommand{\g}{{\bf g}}
\newcommand{\pxi}{{\pa \over \pa x^i}}
\def\beq{\begin{equation}}
\def\eeq{\end{equation}}
\title{\Large \textbf{On Ricci-Tensor of Randers Metrics}\footnote{Accepted in Journal of Geometry and Physics, 2010.}}
\author{A. Tayebi and E. Peyghan}
\begin{document}

\maketitle
 \begin{abstract}
In this paper,  we study  Randers metrics and  find a condition on Ricci tensor of these metrics to be Berwaldian. This generalize  Shen's Theorem which says: every R-flat complete Randers metric is locally Minkowskian. Then we find a necessary and sufficient condition on Ricci tensor under which a Randers  metric of scalar flag curvature is of zero flag curvature.\\\\
{\bf {Keywords:}} Randers metric, Berwald metric, Ricci-Tensor.\footnote{ 2000 Mathematics subject Classification: 53B40, 53C60.}
\end{abstract}

\section{Introduction.}
For a Finsler metric $F=F(x,y)$, its geodesics are characterized by the system
of differential equations $ \ddot c^i+2G^i(\dot c)=0$, where the local functions $G^i=G^i(x, y)$ are called the spray coefficients. A Finsler metric $F$ is  called a Berwald metric if $G^i = {1\over 2} \Gamma^i_{jk}(x)y^jy^k$ are quadratic in $y\in T_xM$  for any $x\in M$. It is  proved that on  a Berwald space,  the parallel translation along any geodesic preserves the Minkowski  functionals \cite{Ic}. Thus Berwald spaces can be viewed as  Finsler spaces modeled on a single Minkowski space.

In order to find explicit examples of  Berwald metrics, we consider Randers metrics.  By definition a Randers metric is a scalar function on $TM$ defined by $F=\alpha +\beta$  where   $\alpha=\sqrt{a_{ij}(x)y^iy^j}$ is a Riemannian metric and $\beta =b_i(x)y^i$ is a 1-form on  $M$.  The Randers metrics were introduced by G. Randers in the context of general relativity \cite{Ra}. For a Randers metric $F=\alpha +\beta$, it is proved that  if $\beta$ is parallel with respect to $\alpha$, then $F$ is a Berwald metric \cite{HaIc1}.

In \cite{ShLandsverg}, Shen prove that  every regular $(\alpha, \beta)$-metric with vanishing Landsberg curvature is a Berwald metric.  Therefore it is interesting to find another Finslerian curvature  with the same property. For this work, we  study  the Riemannian curvature and  its averaging quantity, i.e, the Ricci curvature of Randers metrics.

A Randers metric and its Ricci curvature, are related by their histories in physics. The well-known Ricci curvature was introduced by  Ricci which used to formulate the Einstein's of gravitation.  Nine years later, Ricci's work was used to formulate the Einstein's of gravitation.  Recently  Robles investigated the  Ricci curvature of  Randers metrics and obtained the necessary and sufficient conditions for a Randers metric to be Einstein \cite{BR1}\cite{Rob}. This is one of our motivations  to investigation on the Ricci tensor of Randers metrics.

In this  paper,  we study the Randers metric and its Ricci tensor. We find a condition on Ricci tensor of Randers metric $F=\alpha +\beta$, such that the mean Landsberg tensor  of $F$ satisfying in a ODE (see Lemma \ref{PropRanders}).  By this ODE, we get a condition for complete Randers metric to be Berwald metric (Theorem \ref{thmRanders}). For this reason, let us define
\[
\mathfrak{R}_{ij}:=R_{ij}-\frac{1}{n+1}\Big(I^kR_{ijk}+I^kR_{kji}+F^{-1}\ell_iI^kR_{k0j}+I_iR_{0j}\Big),
\]
where $R_{ijkl}$ is the Riemannian curvature of the  Cartan connection, $R_{ij}:=R^{\ r}_{i\ jr}$ is the Ricci tensor, $R_{i0k}:=R_{ijk}y^j$, $R^{\ r}_{i\ jr}:= g^{rm}R_{imjr}$,  $R_{0k}:=R_{jk}y^j$, $\ell_i:=F_{y^i}$, $I_i:=g^{kl}C_{kli}$ is the mean Cartan tensor, $C_{kli}$ is the Cartan tensor, $g^{ij}$ is the inverse of the fundamental tensor $g_{ij}$ and $I^k=g^{ki}I_i$ \cite{Shim}. By definition, $\mathfrak{R}_{ij}$ is not a symmetric tensor. But in a Riemannian space,  $\mathfrak{R}_{ij}$ is equal to  $R_{ij}$ which is a symmetric tensor. It is  interesting to consider non-Riemannian Finsler spaces with vanishing tensor  $\mathfrak{R}_{ij}$.

According to  \cite{ShLandsverg}, every Randers metric with vanishing Landsberg curvature is a Berwald metric. Is there any other interesting Finslerian quantity which has the same property for Randers metrics? We will show that the  $\mathfrak{R}$-tensor is another candidate. A Finsler metric $F$ is called $\mathfrak{R}$-flat if $\mathfrak{R}_{ij}=0$. We prove the following.
\begin{thm}\label{thmRanders2}
Every complete $\mathfrak{R}$-flat Randers metric $F=\alpha +\beta$ on a manifold $M$  is Berwaldian.
\end{thm}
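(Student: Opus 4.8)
The strategy is to reduce the statement to Shen's theorem (every R-flat complete Randers metric is locally Minkowskian, which is itself the specialization of Theorem \ref{thmRanders} when $\mathfrak{R}_{ij}=0$) by showing that $\mathfrak{R}$-flatness forces the mean Landsberg tensor $J_i$ to vanish, and then invoking the result of \cite{ShLandsverg} that a Randers metric with vanishing Landsberg curvature is Berwaldian. The bridge between $\mathfrak{R}$ and $J$ is exactly Lemma \ref{PropRanders}, which (as flagged in the introduction) says that a condition on the Ricci tensor forces the mean Landsberg tensor to satisfy a certain ODE along geodesics; I will set up that ODE, impose $\mathfrak{R}_{ij}=0$, and use completeness to conclude $J_i\equiv 0$.

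First, I would write down the mean Berwald/Landsberg evolution equation. Recall that along a geodesic $c(t)$ the mean Cartan tensor and the mean Landsberg tensor are linked by $\dot I_i = - J_i$ (up to sign conventions), and differentiating once more produces a term involving the Riemannian curvature $R_{ijk}$. Contracting the Jacobi-type identity for $I_i$ along the flow with the curvature and using the definition
\[
\mathfrak{R}_{ij}=R_{ij}-\frac{1}{n+1}\Big(I^kR_{ijk}+I^kR_{kji}+F^{-1}\ell_iI^kR_{k0j}+I_iR_{0j}\Big),
\]
one sees that $\mathfrak{R}_{ij}=0$ is precisely the integrability condition that makes the second-order ODE for $J_i$ along $c(t)$ homogeneous — schematically $\ddot J_i + (\text{curvature}) J_i = 0$ with no forcing term, which is the content of Lemma \ref{PropRanders}. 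This is the step where the peculiar non-symmetric combination in $\mathfrak{R}_{ij}$ is engineered to kill exactly the inhomogeneous terms, so I would lean on Lemma \ref{PropRanders} as a black box rather than rederive it.

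Next comes the completeness argument. Having a homogeneous linear second-order ODE for $\|J\|^2$ (or for $J_i$ in a parallel frame) along every geodesic, on a complete manifold every geodesic extends to all of $\mathbb{R}$, so $t$ ranges over $(-\infty,\infty)$. Using the structure of the ODE — together with the fact that for a Randers metric the mean Landsberg tensor and its covariant derivatives are bounded along the flow (the Minkowski norms are comparable to a Riemannian one, and the relevant tensors have bounded norm on the indicatrix bundle) — a Lyapunov/boundedness argument of the type in Shen's original proof forces $J_i = 0$ identically: an unbounded solution would contradict completeness, and a bounded nonzero solution of the homogeneous equation is ruled out by the sign of the curvature coefficient or by an integral estimate. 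This mirrors precisely how Shen deduces local Minkowskian from R-flat, and I expect the boundedness/Lyapunov estimate to be the main obstacle, since it is where completeness is genuinely used and where one must control the behavior of $J$ uniformly along arbitrarily long geodesics.

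Finally, with $J_i\equiv 0$ established, I invoke the theorem of \cite{ShLandsverg}: for a Randers metric, vanishing mean Landsberg curvature already implies vanishing Landsberg curvature (this is special to $(\alpha,\beta)$-metrics), hence $F$ is a Berwald metric. Combined with completeness one could even upgrade to locally Minkowskian, recovering Shen's theorem as the stated generalization, but for Theorem \ref{thmRanders2} as phrased it suffices to stop at Berwaldian. I would close by remarking that the only place completeness enters is the ODE-killing step, exactly as in the classical R-flat case.
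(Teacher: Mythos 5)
Your overall architecture coincides with the paper's: Lemma \ref{PropRanders} is the bridge from the $\mathfrak{R}$-condition to an ODE for ${\bf J}$ along geodesics, completeness plus a boundedness estimate kills ${\bf J}$, and C-reducibility then upgrades ${\bf J}=0$ to ${\bf L}=0$ and hence to Berwald (the paper cites Matsumoto's theorem that a Landsberg Randers metric is Berwald; your citation of \cite{ShLandsverg} serves the same purpose). However, you misstate what the key lemma says, and the completeness argument you propose in its place would not close the proof. Lemma \ref{PropRanders} does not yield a homogeneous second-order equation $\ddot J_i+(\textrm{curvature})\,J_i=0$; it yields the much stronger first-order statement $\frac{d}{dt}[{\bf J}_{\dot c}(u,v)]=0$, i.e. $J_{i|s}y^s=0$. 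Concretely, contracting the Bianchi-derived identity with $y^k$ gives
\[
R_{00,i}=R_{i0}+R_{0i}-\frac{1}{n+1}\bigl(2I^mR_{m0i}+R_{00}I_i\bigr)-J_{i|s}y^s,
\]
and the hypotheses $\mathfrak{R}_{0i}=\mathfrak{R}_{i0}=0$ force $R_{0i}=0$ (hence $R_{00}=0$ and $R_{00,i}=0$) and $R_{i0}=\frac{2}{n+1}I^mR_{m0i}$, so every term except $J_{i|s}y^s$ cancels. The non-symmetric combination in $\mathfrak{R}_{ij}$ is engineered exactly for this cancellation.

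With $\dot{\bf J}=0$ in hand, the function ${\bf I}(t)={\bf I}_{\dot c}(V(t))$ satisfies ${\bf I}''(t)=0$, so it is affine in $t$. The quantity that is bounded and exploited is the mean Cartan torsion of the Randers metric, via the explicit estimate $\|{\bf I}\|_x<\frac{n+1}{\sqrt 2}$; completeness lets $t$ range over all of $\mathbb{R}$, and a bounded affine function is constant, whence ${\bf J}(0)=0$. There is no Lyapunov estimate and no sign condition on curvature anywhere. Your proposed mechanism (ruling out bounded nonzero solutions "by the sign of the curvature coefficient or by an integral estimate") is a genuine gap: the theorem imposes no curvature sign hypothesis, and a homogeneous equation $\ddot J+KJ=0$ with $K>0$ has bounded nonzero solutions, so that route cannot conclude ${\bf J}=0$. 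The step you flag as "the main obstacle" is in fact trivial once the lemma is stated correctly, and the bounded object is ${\bf I}$, not ${\bf J}$ and its derivatives.
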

A Finsler metric $F$ is called  R-flat, if $R_{ijkl}=0$. Then every  R-flat metric is $\mathfrak{R}$-flat. In \cite{Sh1}, it is proved that every   R-flat Berwald metric is locally Minkowskian.  Therefore by  Theorem \ref{thmRanders2}, we get the following corollary.
\begin{cor}\label{Shen}
Every complete  R-flat Randers metric $F=\alpha +\beta$ on a manifold $M$  is locally Minkowskian.
\end{cor}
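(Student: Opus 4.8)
The plan is to obtain Corollary~\ref{Shen} purely as a formal consequence of Theorem~\ref{thmRanders2} together with the result of \cite{Sh1} that every R-flat Berwald metric is locally Minkowskian. The only thing that genuinely needs checking is the implication ``R-flat $\Rightarrow$ $\mathfrak{R}$-flat,'' already asserted in the text; once that is in place the chain of implications closes immediately.

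First I would inspect the definition
\[
\mathfrak{R}_{ij}=R_{ij}-\frac{1}{n+1}\Big(I^kR_{ijk}+I^kR_{kji}+F^{-1}\ell_iI^kR_{k0j}+I_iR_{0j}\Big)
\]
and observe that \emph{every} term on the right-hand side is a contraction of the Riemannian curvature tensor $R_{ijkl}$ of the Cartan connection: indeed $R_{ij}=R^{\ r}_{i\ jr}=g^{rm}R_{imjr}$, $R_{ijk}=R_{i0k}/\!\!/$-type objects arise as $R_{ijk}=R_{ijkl}y^l$ (via $R_{i0k}=R_{ijk}y^j$), and $R_{0k}=R_{jk}y^j$ is again a contraction of $R_{ij}$. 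Hence the vanishing $R_{ijkl}=0$ forces $R_{ij}=0$, $R_{ijk}=0$, $R_{0j}=0$, and therefore $\mathfrak{R}_{ij}=0$. No properties of the metric other than R-flatness are used here, so this step is routine bookkeeping.

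With this in hand the argument is: let $F=\alpha+\beta$ be a complete R-flat Randers metric on $M$. By the previous paragraph $F$ is $\mathfrak{R}$-flat, so Theorem~\ref{thmRanders2} applies and shows that $F$ is a Berwald metric. Since $F$ remains R-flat, the theorem of \cite{Sh1} (every R-flat Berwald metric is locally Minkowskian) yields that $F$ is locally Minkowskian, which is the claim.

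The ``hard part'' is not in the corollary itself but is inherited from Theorem~\ref{thmRanders2}: the real work lies in showing that $\mathfrak{R}$-flatness of a Randers metric forces the mean Landsberg tensor to satisfy the ODE of Lemma~\ref{PropRanders} and hence, using completeness, to vanish, so that $F$ is Berwaldian. The corollary only needs one to verify that the (strictly weaker) hypothesis R-flat implies $\mathfrak{R}$-flat and that Berwald $+$ R-flat $=$ locally Minkowskian by \cite{Sh1}; completeness is simply carried along unchanged from the hypothesis into the application of Theorem~\ref{thmRanders2}. I would therefore present the proof in two lines, referring back to the definition of $\mathfrak{R}_{ij}$, to Theorem~\ref{thmRanders2}, and to \cite{Sh1}.
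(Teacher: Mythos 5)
Your proposal is correct and follows exactly the paper's own route: the authors likewise note that every term in $\mathfrak{R}_{ij}$ is a contraction of $R_{ijkl}$, so R-flatness gives $\mathfrak{R}$-flatness, then apply Theorem \ref{thmRanders2} to get that $F$ is Berwaldian and invoke the cited result that every R-flat Berwald metric is locally Minkowskian. The only difference is that you spell out the routine verification that R-flat implies $\mathfrak{R}$-flat, which the paper merely asserts.
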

The Corollary \ref{Shen}  was proved by Prof. Z. Shen as Theorem 1.2 in \cite{ShKS}. Then  Theorem \ref{thmRanders2}  can be regarded as a generalization of  Theorem 1.2 in \cite{ShKS}.

\bigskip

As we mentioned, $\mathfrak{R}$-tensor is not symmetric. Therefore, it is a natural problem, that we find some conditions under which the $\mathfrak{R}$-tensor became symmetric. In section 3, we study  Randers metric of scalar flag curvature ${\bf K}$ with symmetric $\mathfrak{R}$-tensor. To our surprise that for a non-Riemannian  Randers metric of scalar flag curvature ${\bf K}$,  the $\mathfrak{R}$-tensor is symmetric if and only if ${\bf K}=0$.
\begin{thm}\label{RS2}
Let $F=\alpha+\beta$ be a non-Riemannian Randers metric of scalar flag curvature ${\bf K}$. Then  ${\bf K}=0$ if and only if\ \ $\mathfrak{R}$-tensor  is symmetric.
\end{thm}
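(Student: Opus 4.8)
The plan is to isolate the skew-symmetric part of the $\mathfrak R$-tensor and to show that, for a Randers metric of scalar flag curvature $\mathbf K$, it equals a nonzero universal constant times $\mathbf K\,(\ell_iI_j-\ell_jI_i)$. Once this identity is in hand, both implications of the theorem are immediate: if $\mathbf K\equiv0$ the right-hand side vanishes and $\mathfrak R$ is symmetric, while if $\mathfrak R$ is symmetric then $\mathbf K\,(\ell_iI_j-\ell_jI_i)\equiv0$, and the non-Riemannian hypothesis (so that $I_i\not\equiv0$) forces $\mathbf K\equiv0$. Thus the whole content is the curvature computation producing the identity.

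The two structural inputs are: (i) a Randers metric is C-reducible, $C_{ijk}=\tfrac{1}{n+1}\big(h_{ij}I_k+h_{jk}I_i+h_{ki}I_j\big)$ with $h_{ij}$ the angular metric — observe that the coefficient $\tfrac1{n+1}$ built into the definition of $\mathfrak R_{ij}$ is exactly the C-reducibility coefficient, which is what makes the dangerous Cartan-curvature contractions collapse; and (ii) scalar flag curvature, $R^i{}_{k}=\mathbf K\,(F^2\delta^i_k-FF_{y^k}y^i)$, whence $R^i{}_{jk}$ is obtained by vertical differentiation and then, via the Bianchi identities for the Cartan connection and the standard $h$–$hv$ curvature relations, the full h-curvature $R_{ijkl}$ is a universal expression in $\mathbf K$, its horizontal and vertical derivatives, $g_{ij}$, $h_{ij}$, $C_{ijk}$ and the Landsberg tensor. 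First I would substitute these into the pieces $R_{ij}$, $R_{ijk}$, $R_{0j}$, $R_{k0j}$ entering $\mathfrak R_{ij}$, using C-reducibility to reduce every $I^k$-contraction to combinations of $h_{ij}$, $I_i$ and $\|I\|^2$; then I would antisymmetrize in $i,j$. The expectation — consistent with Theorem \ref{thmRanders2}, $\mathfrak R$-flatness being a strong hypothesis — is that the symmetric part absorbs all derivative-of-$\mathbf K$ and Landsberg contributions, while in $\mathfrak R_{ij}-\mathfrak R_{ji}$ only the terms stemming from $F^{-1}\ell_iI^kR_{k0j}$ together with the residual $I_iR_{0j}$ and $I^kR_{ijk}$ pieces survive, recombining into $c_n\,\mathbf K\,(\ell_iI_j-\ell_jI_i)$ for a constant $c_n\neq0$ depending only on $n$. (Alternatively, symmetry of $\mathfrak R$ should feed into the ODE mechanism of Lemma \ref{PropRanders} for the mean Landsberg tensor, which for a scalar-flag-curvature Randers metric is tied to $\mathbf K$; but the direct computation above is the more robust route.)

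Granting the identity $\mathfrak R_{ij}-\mathfrak R_{ji}=c_n\,\mathbf K\,(\ell_iI_j-\ell_jI_i)$, the $(\Leftarrow)$ direction goes: symmetry gives $\mathbf K\,(\ell_iI_j-\ell_jI_i)\equiv0$; contracting with any $v^j$ satisfying $\ell_jv^j=0$ yields $\mathbf K\,\ell_i\,(I_jv^j)\equiv0$, so $\mathbf K=0$ at every point $(x,y)$ where the mean Cartan tensor has a component transverse to $y$. Since $F$ is non-Riemannian, $I_i$ is nonzero on a dense subset of the slit tangent bundle — it vanishes for all $y$ over $x$ only where $b_i(x)=0$ — so $\mathbf K$ vanishes on a dense set and hence $\mathbf K\equiv0$ by continuity. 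The $(\Rightarrow)$ direction is then trivial from the same identity.

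The main obstacle is the second step: obtaining and correctly simplifying the closed form of the Cartan h-curvature $R_{ijkl}$ of a Finsler metric of scalar flag curvature, and verifying that after C-reducible contraction every derivative-of-$\mathbf K$ term and every Landsberg term is symmetric in $i,j$ and so disappears from $\mathfrak R_{ij}-\mathfrak R_{ji}$, leaving exactly $c_n\mathbf K(\ell_iI_j-\ell_jI_i)$ with $c_n\neq0$ for all $n\ge2$. A secondary, purely technical point is to make the density argument for the zero locus of $I_i$ precise, e.g.\ via the explicit formula for $\|I\|^2$ of a Randers metric, a nonnegative function whose zero set has empty interior whenever $b_i\not\equiv0$.
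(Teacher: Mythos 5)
Your overall strategy is the paper's: for a Randers metric of scalar flag curvature one substitutes the Akbar--Zadeh formula $R_{ijk}=\frac{F^2}{3}({\bf K}_{,j}h_{ik}-{\bf K}_{,k}h_{ij})+{\bf K}(y_jh_{ik}-y_kh_{ij})$ (together with symmetry of $R_{ij}$ and C-reducibility) into the definition of $\mathfrak{R}_{ij}$, reads off the skew-symmetric part, and uses $I_i\neq 0$ to force ${\bf K}=0$. However, there is a concrete inaccuracy in the identity your whole argument is built on. The paper's computation (its equation (\ref{1-2})) gives
\[
\mathfrak{R}_{ij}-\mathfrak{R}_{ji}=-\tfrac{F^2}{3}\big(I_i{\bf K}_{,j}-I_j{\bf K}_{,i}\big)-{\bf K}\big(I_iy_j-I_jy_i\big),
\]
so the vertical-derivative-of-${\bf K}$ terms do \emph{not} all get absorbed into the symmetric part, contrary to your expectation; the skew part is not of the pure form $c_n{\bf K}(\ell_iI_j-\ell_jI_i)$. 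This matters for your deduction in the $(\Leftarrow)$ direction: contracting the skew part with a vector $v$ satisfying $\ell_jv^j=0$ does not yield ${\bf K}\,\ell_i(I_jv^j)=0$, because the surviving ${\bf K}_{,j}v^j$ terms contaminate the conclusion. The correct (and simpler) move is to contract with $y^j$ itself: since ${\bf K}_{,j}y^j=0$, $I_jy^j=0$ and $y_jy^j=F^2$, one gets $\mathfrak{R}_{i0}-\mathfrak{R}_{0i}=-{\bf K}F^2I_i$, whence symmetry forces ${\bf K}F^2I_i=0$ and then ${\bf K}=0$ wherever $I\neq 0$. (The $(\Rightarrow)$ direction is unaffected, since ${\bf K}\equiv 0$ also kills ${\bf K}_{,i}$.)

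Beyond that, the central computation is only announced, not carried out: you defer exactly the step (``obtaining and correctly simplifying the closed form of the Cartan h-curvature and verifying that the Landsberg and $d{\bf K}$ contributions are symmetric'') that constitutes the proof, and as noted the verification would in fact fail as stated for the $d{\bf K}$ terms. Your closing remark about the zero locus of $I_i$ is a fair point of rigor (the paper itself simply asserts $I_i\neq 0$ from non-Riemannianity), but your claim that $I$ vanishes for all $y$ over $x$ only where $b(x)=0$, and that this set automatically has empty interior when $b\not\equiv 0$, would still need justification, since a priori $b$ could vanish on an open set.
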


There are many connections in Finsler geometry \cite{TAE}. Throughout this paper, we use  the Cartan connection on Finsler manifolds. The $h$- and $v$- covariant derivatives of a Finsler tensor field are denoted by `` $|$ " and ``, " respectively.


\section{Preliminaries}\label{sectionP}
Let $M$ be a n-dimensional $ C^\infty$ manifold. Denote by $T_x M $ the tangent space at $x \in M$, by $TM=\cup _{x \in M} T_x M $ the tangent bundle and by $TM_{0}:= TM \setminus\{0\}$ the slit tangent bundle  of $M$.

Let  $x\in M$ and $F_x:=F|_{T_xM}$.  To measure the non-Euclidean feature of $F_x$, define ${\bf C}_y:T_xM\times T_xM\times T_xM\rightarrow \mathbb{R}$ by ${\bf C}_y(u,v,w):=C_{ijk}(y)u^iv^jw^k$ where
\[
C_{ijk}(y):=\frac{1}{4}{{\partial^3 F^2} \over {\partial y^i \partial y^j
\partial y^k}}(y).
\]
The family ${\bf C}:=\{{\bf C}_y\}_{y\in TM_0}$  is called the Cartan torsion. It is well known that {\bf{C}=0} if and only if $F$ is Riemannian \cite{Sh1}. Define  mean Cartan torsion ${\bf I}_y$ by ${\bf I}_y(u):=I_i(y)u^i$, where $I_i:=g^{jk}C_{ijk}$ and $g^{jk}:=(g_{jk})^{-1}$. By Deicke's Theorem, {\bf{I}=0} if and only if $F$ is Riemannian \cite{Sh1}.

\smallskip

For  $y \in T_xM_0$, define the  Matsumoto torsion ${\bf M}_y:T_xM\otimes T_xM \otimes T_xM \rightarrow \mathbb{R}$ by ${\bf M}_y(u,v,w):=M_{ijk}(y)u^iv^jw^k$ where
\[
M_{ijk}:=C_{ijk} - {1\over n+1}  \{ I_i h_{jk} + I_j h_{ik} + I_k h_{ij} \}.\label{Matsumoto}
\]
A Finsler metric $F$ is said to be C-reducible if ${\bf M}_y=0$. This quantity is introduced by  Matsumoto \cite{Ma}. Matsumoto proves that every Randers metric satisfies that ${\bf M}_y=0$ \cite{M3}.

\smallskip

The horizontal covariant derivatives of ${\bf C}$ and $\bf{I}$ along geodesics give rise to  the  Landsberg curvature  ${\bf L}_y:T_xM\times T_xM\times T_xM\rightarrow \mathbb{R}$ and mean Landsberg curvature  ${\bf J}_y:T_xM\rightarrow \mathbb{R}$ defined by ${\bf L}_y(u,v,w):=L_{ijk}(y)u^iv^jw^k$ and ${\bf J}_y(u): = J_i (y)u^i$  where
\[
L_{ijk}:=C_{ijk|s}y^s\ \  \textrm{and}  \ \ J_i:=I_{i|s}y^s.
\]
The families ${\bf L}:=\{{\bf L}_y\}_{y\in TM_{0}}$ and ${\bf J}:=\{{\bf J}_y\}_{y\in TM_{0}}$ are called the Landsberg curvature and mean Landsberg curvature. A Finsler metric is called  Landsberg metric and  weakly Landsberg metric if {\bf{L}=0} and ${\bf J}=0$, respectively \cite{Sh1}.

\bigskip

Given a Finsler manifold $(M,F)$, then a global vector field ${\bf G}$ is
induced by $F$ on $TM_0$, which in a standard coordinate $(x^i,y^i)$
for $TM_0$ is given by
\[
{\bf G}=y^i {{\partial} \over {\partial x^i}}-2G^i(x,y){{\partial} \over
{\partial y^i}},
\]
where $G^i(x,y):={1\over 4}g^{il}(x, y)\{[F^2]_{x^ky^l}y^k-[F]^2_{x^l}\}$ are called the spray coefficients of ${\bf G}$.  The vector field ${\bf G}$ is called the  associated spray to $(M,F)$. The projection  of an integral curve of ${\bf G}$  to $M$ is a geodesic of  $(M, F)$. Using the spray coefficients of ${\bf G}$, one can define
\[
B^i_{\ jkl}(y):={{\partial^3 G^i} \over {\partial y^j \partial y^k
\partial y^l}}(y).
\]
For a  vector $y \in T_xM_0$, define the quantity ${\bf B}_y:T_xM\otimes T_xM \otimes T_xM \rightarrow T_xM$ by
$
{\bf B}_y(u, v, w):=B^i_{\ jkl}(y)u^jv^kw^l{{\partial } \over {\partial
x^i}}|_x,
$
The quantity ${\bf B}$ is called the Berwald curvature. A Finsler metric is called a Berwald metric  if {\bf{B}=0} \cite{Sh1}. Every Berwald metric is a Landsberg metric \cite{Sh1}.

\bigskip

The Riemann curvature ${\bf R}_y= R^i_{\ k}  dx^k \otimes \pxi|_x :
T_xM \to T_xM$ is a family of linear maps on tangent spaces, defined
by
\[
R^i_{\ k} = 2 {\pa G^i\over \pa x^k}-y^j{\pa^2 G^i\over \pa
x^j\pa y^k} +2G^j {\pa^2 G^i \over \pa y^j \pa y^k} - {\pa G^i \over
\pa y^j} {\pa G^j \over \pa y^k}.  \label{Riemann}
\]
For a flag $P={\rm span}\{y, u\} \subset T_xM$ with flagpole $y$, the  flag curvature ${\bf K}={\bf K}(P, y)$ is defined by
\[
{\bf K}(P, y):= {\g_y (u, {\bf R}_y(u)) \over \g_y(y, y) \g_y(u,u)
-\g_y(y, u)^2 },
\]
where $\g_y = g_{ij}(x,y)dx^i\otimes dx^j$.  We say that a Finsler metric $F$ is   of scalar curvature if for any $y\in T_xM$, the flag curvature ${\bf K}= {\bf K}(x, y)$ is a scalar function on  $TM_0$. If ${\bf K}=constant$, then $F$ is said to be of  constant flag curvature.

\bigskip

Let us consider the pull-back tangent bundle $\pi^*TM$ over $TM_0$ defined by $\pi^*TM=\left\{(u, v)\in TM_0 \times TM_0 | \pi(u)\\=\pi(v)\right\}$. Take a local coordinate system $(x^i)$ in $M$, the local natural frame $\{{{\partial} \over {\partial x^i}}\}$  of  $T_xM$ determines a local natural frame $\partial_i|_v$ for $\pi^*_vTM$ the fibers of  $\pi^*TM$, where  ${\partial _i |_v=(v,{{\partial} \over {\partial x^i}}| _x )}$, and $v=y^i{{\partial}\over {\partial x^i}}|_x\in TM_0$. The fiber $\pi^*_vTM$ is isomorphic to  $T_{\pi(v)}M$ where $\pi(v)=x$. There is a canonical section $\ell$  of $\pi^*TM$ defined by $\ell_v=(v,v)/F(v)$.

Now let $\nabla$ be the Cartan connection on  $\pi^*TM$ and $\{e_i\}^n _{i=1}$ be a  local orthonormal frame field for $\pi ^* TM$ such   that $e_n=\ell$. Let $\{\omega^i\}^n_{i=1}$ be its dual co-frame field.  Put   $\nabla e_i = \omega ^{\ j} _i \otimes e_j$ and $ \Omega e_i=2\Omega ^{\ j} _i \otimes e_j$,   where $\{\Omega ^{\ j} _{i}\}$ and $\{\omega ^{\ j} _{i}\}$ are called respectively,  the  curvature forms and  connection forms of $\nabla$ with respect to   $\{e_{i}\}$. Put $\omega^{n+i}:=\omega^{\ i}_n +d(log F)\delta^i _n$.  Then $\{\omega ^i, \omega^{n+i} \}^n_{i=1}$ is a  local basis for $T^*( TM_0)$. Since $\{\Omega ^{\ j} _{i}\}$ are 2-forms on $TM_0$,  they can be expanded as
\[
\Omega^{\ j}_ i={1 \over 2}R^{\ j} _{i \ kl} \omega ^k \wedge
\omega^l +P^{\ j} _{i \ kl} \omega ^k \wedge \omega^{n+l}+{1 \over
2}Q^{\ j} _{i \ kl} \omega ^{n+k} \wedge \omega^{n+l}.
\]
Let $\{\bar e_i, \dot e_i\}^n _{i=1}$ be the local basis for $T(TM_0)$, which is dual to $\{\omega ^i, \omega^{n+i} \}^n  _{i=1}$. The objects $R$, $P$ and  $Q$ are called, respectively,  the hh-, hv- and vv-curvature  tensors of the Cartan connection  with the components $R(\bar e_k,\bar e_l)e_i =R^{\ j}_{i \ kl}e_j$, $ P(\bar e_k,\dot e_l)e_i=P^{\ j}_{i \ kl} e_j$ and $Q(\dot e_k,\dot e_l)e_i=Q^{\ j} _{i \ kl} e_j$ (\cite{TAE}).

\section{Proof of Theorem \ref{thmRanders2}.}\setcounter{equation}{0}
In these section, we will prove a generalized version of Theorem  \ref{thmRanders2}. Indeed  we study  complete Randers metric $F=\alpha+\beta$ with assumption   $\mathfrak{R}_{0j}=\mathfrak{R}_{i0}=0$ where $\mathfrak{R}_{0j}=\mathfrak{R}_{ij}y^i$ and $\mathfrak{R}_{i0}=\mathfrak{R}_{ij}y^j$. More precisely, we  prove the following.
\begin{thm}\label{thmRanders}
Let $F=\alpha +\beta$ be a complete Randers metric on a  manifold $M$. Suppose that  $\mathfrak{R}_{0i}=\mathfrak{R}_{i0}=0$. Then $F$ is a Berwald metric.
\end{thm}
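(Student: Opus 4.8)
The plan is to translate the hypothesis $\mathfrak{R}_{0i}=\mathfrak{R}_{i0}=0$ into an ODE along geodesics for the mean Landsberg curvature $\mathbf{J}$, and then use completeness to force $\mathbf{J}=0$; since $F$ is Randers, the theorem of Shen on $(\alpha,\beta)$-metrics with vanishing Landsberg curvature (or rather its mean version for Randers metrics) then yields that $F$ is Berwald. The first step is to contract the defining identity
\[
\mathfrak{R}_{ij}=R_{ij}-\frac{1}{n+1}\Big(I^kR_{ijk}+I^kR_{kji}+F^{-1}\ell_iI^kR_{k0j}+I_iR_{0j}\Big)
\]
with $y^i$ and with $y^j$ respectively. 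Using $R_{i0k}=R_{ijk}y^j$, $\ell_iy^i=F$, and the homogeneity/contraction identities for the Cartan curvature tensors of the Cartan connection, the two contracted equations should collapse to a pair of relations among $R_{0j}$, $I^kR_{k0j}$, $I^kR_{0jk}$, and $J$-type terms. The key classical fact I would invoke here is the Bianchi-type / structure identity relating the $hh$-curvature $R^{\ j}_{i\ kl}$ of the Cartan connection to the horizontal derivatives of the Cartan torsion — in particular the identity that expresses $I^kR_{k0j}$ (and its companions) in terms of $J_{j|0}$, i.e. the second $h$-covariant derivative of $\mathbf{I}$ along the geodesic spray. This is exactly the mechanism by which an algebraic condition on curvature becomes a differential condition on $\mathbf{J}$.

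Concretely, I expect that after the contractions one obtains an equation of the schematic form
\[
J_{i|0} + (\text{curvature}) \, J_i + (\text{lower order in }\mathbf{I}) = 0,
\]
and then, restricting to the Randers case, the Matsumoto relation $M_{ijk}=0$ (every Randers metric is C-reducible) lets one replace $C_{ijk}$, and hence $L_{ijk}$, $J_i$, entirely in terms of the mean quantities $I_i$ and the angular metric $h_{ij}$. For a Randers metric one also has the well-known sharp bound $\|\mathbf{I}\|^2 = I_iI^k g^{ik}\le \tfrac{n^2}{n+1}$ (a consequence of C-reducibility plus the explicit form of the Cartan torsion), and similarly $\mathbf{J}$ is controlled by $\mathbf{I}$ via the C-reducibility relation $L_{ijk}=\frac{1}{n+1}\{J_ih_{jk}+J_jh_{ik}+J_kh_{ij}\}$ up to the derivative of $\beta$. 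The upshot is a genuine ODE for the function $t\mapsto \|\mathbf{J}\|^2$ (or for $J_i$ in a parallel frame) along each unit-speed geodesic $c(t)$, of the form $\frac{d}{dt}\big(\text{something involving }\|\mathbf{J}\|\big)$ bounded in terms of $\|\mathbf{J}\|$ itself, where the "bounded" uses precisely the uniform bound on $\|\mathbf{I}\|$ available for Randers metrics.

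Completeness enters in the standard Shen-type way: since $F$ is complete, every geodesic extends to all $t\in\mathbb{R}$, so the ODE for $\|\mathbf{J}\|$ along $c$ holds on all of $\mathbb{R}$; combined with the a priori boundedness of $\|\mathbf{I}\|$ (hence of any coefficient in the ODE), a maximum-principle / Gronwall argument forces $\mathbf{J}\equiv 0$ — otherwise $\|\mathbf{J}\|$ would blow up in finite time or fail the boundedness, a contradiction. Once $\mathbf{J}=0$, i.e. $F$ is weakly Landsberg, one invokes that a weakly Landsberg Randers metric is Landsberg (this again follows from C-reducibility: $\mathbf{L}$ is determined by $\mathbf{J}$), and then Shen's theorem that a Landsberg $(\alpha,\beta)$-metric is Berwald finishes the proof. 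The main obstacle I anticipate is the first part: carefully extracting the correct ODE from the contracted $\mathfrak{R}$-identity — this requires the right Bianchi identity for the Cartan connection and a careful bookkeeping of which curvature contractions produce $J_{i|0}$ versus which produce algebraic $\mathbf{I}$-terms, and it is here that the specific coefficient $\frac{1}{n+1}$ in the definition of $\mathfrak{R}_{ij}$ must be exactly what makes the non-derivative curvature terms cancel, leaving a clean ODE. The subsequent completeness/Gronwall step and the appeal to Shen's theorem are comparatively routine.
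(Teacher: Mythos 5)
Your overall strategy coincides with the paper's: contract the definition of $\mathfrak{R}_{ij}$ with $y^i$ and $y^j$ to get $R_{0i}=0$ and $R_{i0}=\frac{2}{n+1}I^mR_{m0i}$, feed these into a contracted Bianchi identity of the Cartan connection (simplified via C-reducibility of Randers metrics) to obtain $J_{i|s}y^s=0$ along geodesics, then use completeness plus the uniform bound on the mean Cartan torsion of a Randers metric to force $\mathbf{J}=0$; C-reducibility upgrades $\mathbf{J}=0$ to $\mathbf{L}=0$, and a Landsberg Randers metric is Berwald. You also correctly anticipate that the coefficient $\frac{1}{n+1}$ is precisely what makes the non-derivative curvature terms cancel.

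The one step that would fail as you describe it is the final analytic one. A Gronwall or maximum-principle argument on an ODE for $\|\mathbf{J}\|$ with bounded coefficients can only rule out finite-time blow-up; it cannot produce $\mathbf{J}\equiv 0$. The mechanism actually needed is more elementary: $J_{i|s}y^s=0$ says $\mathbf{I}''(t)=0$, where $\mathbf{I}(t):=\mathbf{I}_{\dot c}(V(t))$ for a parallel field $V$ along the geodesic $c$, so $\mathbf{I}(t)=t\,\mathbf{J}(0)+\mathbf{I}(0)$ is affine in $t$. Completeness makes this valid for all $t\in\mathbb{R}$, and the uniform Randers bound $\|\mathbf{I}\|_x<\frac{n+1}{\sqrt{2}}$, together with the constancy of $g_{\dot c}(V,V)$ along $c$, makes $\mathbf{I}(t)$ bounded; a bounded affine function on $\mathbb{R}$ has zero slope, hence $\mathbf{J}(0)=0$. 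In short, the boundedness is applied to $\mathbf{I}$, the quantity whose derivative along the flow is $\mathbf{J}$, not to the coefficients of an ODE for $\mathbf{J}$. (Your quoted bound $\|\mathbf{I}\|^2\le n^2/(n+1)$ is also not the Randers bound, but any uniform bound suffices for this argument.)
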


To prove Theorem \ref{thmRanders},  we are going to establish a relation between the mean Landsberg curvature  of a Randers metric  and  its Ricci tensor  in the case that  $\mathfrak{R}_{0j}=\mathfrak{R}_{i0}=0$.  In this case, we find that the mean Landsberg tensor ${\bf J}$ of Rander  metric satisfy a special equation along geodesics:
\begin{lem}\label{PropRanders}
Let $F=\alpha+\beta$  be a Randers metric on a n-dimensional manifold $M$.  Suppose that the Ricci tensor of $F$ satisfy    $\mathfrak{R}_{0i}=\mathfrak{R}_{i0}=0$. Then for any linearly parallel vector fields $u=u(t)$ and $v=v(t)$ along a geodesic $c(t)$, we have
 \begin{equation}\label{RMS2}
\frac{d}{dt}[{\bf J}_{\dot{c}}(u,v)]=0.
\end{equation}
\end{lem}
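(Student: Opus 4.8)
The plan is to translate the hypothesis $\mathfrak{R}_{0i}=\mathfrak{R}_{i0}=0$ into a statement purely about the mean Landsberg curvature, and then to exploit the special structure of Randers metrics — namely that they are $C$-reducible, $M_{ijk}=0$ — to close the argument. First I would write out the contracted $\mathfrak{R}$-tensor. Contracting the defining formula
\[
\mathfrak{R}_{ij}=R_{ij}-\frac{1}{n+1}\Big(I^kR_{ijk}+I^kR_{kji}+F^{-1}\ell_iI^kR_{k0j}+I_iR_{0j}\Big)
\]
with $y^i$ and with $y^j$ respectively, and using the standard homogeneity identities $R_{ij}y^i=R_{0j}$, $R_{ijk}y^i=R_{0jk}$, $\ell_iy^i=F$, together with the first Bianchi-type symmetries of the $hh$-curvature $R_{ijkl}$ of the Cartan connection, should reduce $\mathfrak{R}_{0i}=0$ and $\mathfrak{R}_{i0}=0$ to two linear relations among the quantities $R_{0i}$, $I^kR_{k0i}$, $I^kR_{i0k}$ and $I_iR_{00}$ (note $R_{00}=R_{ij}y^iy^j=0$ by antisymmetry in the last two indices, which kills the last term in one of the contractions). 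The upshot I expect is an identity of the form $R_{0i}=\text{(something built from }I^k\text{ and the Riemann curvature)}$, i.e. a formula expressing the "Ricci-directional" curvature in terms of Cartan-torsion-weighted curvature contractions.

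Next I would bring in the Finslerian identity relating the mean Landsberg curvature to the $hv$-curvature. Differentiating $J_i = I_{i|s}y^s$ once more along the flow, or rather using the known commutation/structure equation for the Cartan connection, one has a formula of the shape $J_{i|s}y^s = (\text{curvature terms}) - I^k R_{\,i\ 0k}^{\ r}\cdots$; more precisely the relevant fact is the Finslerian analogue of the second Bianchi identity contracted appropriately, which yields $\dot J_i$ (the covariant derivative of $J$ along the geodesic) as a combination of exactly the curvature contractions that appeared in the previous paragraph. The strategy is to show that the hypothesis $\mathfrak{R}_{0i}=\mathfrak{R}_{i0}=0$ forces precisely the combination of curvature terms governing $\dot J_i$ to vanish. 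For this I would also need to use $M_{ijk}=0$, i.e. $C_{ijk}=\frac{1}{n+1}(I_ih_{jk}+I_jh_{ik}+I_kh_{ij})$, which is the place where the Randers hypothesis (as opposed to a general $(\alpha,\beta)$-metric) is genuinely used: it lets one replace the full Cartan torsion appearing in various curvature contractions by its mean part $I_i$, matching the mean quantities in $\mathfrak{R}_{ij}$. Contracting with the parallel fields $u^i$ (and $v^j$, for the bilinear version ${\bf J}_{\dot c}(u,v)$ — note that although ${\bf J}_y$ as defined is a $1$-form, the statement \eqref{RMS2} writes ${\bf J}_{\dot c}(u,v)$, so I would interpret this via the covariant derivative $J_{i|j}$ or via the stalk structure; in any case parallelism of $u,v$ along $c$ means $\frac{d}{dt}[J_i u^i v^j(\cdots)] = (J_{i|s}y^s)u^i(\cdots)$, so everything reduces to showing $J_{i|s}y^s = 0$) then gives \eqref{RMS2}.

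The main obstacle I anticipate is the bookkeeping in the second step: establishing the correct identity expressing $J_{i|s}y^s$ in terms of the $hh$- and $hv$-curvatures of the Cartan connection. This is essentially a contracted Bianchi identity, and the signs, the index positions (raising with $g^{ij}$ vs. $a^{ij}$), and the presence of the $P$-curvature (the $hv$-tensor) terms all have to be handled with care; in particular one must check that the $P$-curvature contributions either cancel or get absorbed into the $\mathfrak{R}$ combination, since they do not appear explicitly in the definition of $\mathfrak{R}_{ij}$. A secondary subtlety is that $\mathfrak{R}_{ij}$ is not symmetric, so $\mathfrak{R}_{0i}=0$ and $\mathfrak{R}_{i0}=0$ are genuinely two conditions, and I expect that one of them controls the "$I^kR_{ijk}$"-type term while the other controls the "$I^kR_{kji}$"-type term; using both, and the Randers identity $M_{ijk}=0$, should be exactly enough to kill $\dot J_i$. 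Once $J_{i|s}y^s=0$ is in hand, \eqref{RMS2} is immediate from the product rule together with the parallelism of $u$ and $v$.
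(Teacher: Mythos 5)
Your outline is essentially the paper's own argument: the authors likewise contract the definition of $\mathfrak{R}_{ij}$ with $y^i$ and $y^j$ to extract conditions on $R_{0i}$ and $R_{i0}$ (their \eqref{RMS11}--\eqref{RMS11*}), and obtain the needed formula for $J_{i|s}y^s$ from Shimada's contracted Bianchi identity for the Cartan connection combined with C-reducibility $M_{ijk}=0$ to reduce all Cartan/Landsberg terms to their mean parts (their \eqref{RMS5}--\eqref{RMS10}), exactly as you propose. One slip: your parenthetical claim that $R_{00}=R_{ij}y^iy^j=0$ ``by antisymmetry'' is false in general (it is the Ricci curvature in the direction $y$, and indeed the term $I_iR_{00}$ survives in the contraction \eqref{RMS11*}); $R_{00}$ vanishes here only \emph{after} the hypothesis $\mathfrak{R}_{0i}=0$ forces $R_{0i}=0$, which is how the paper uses it, so the argument still closes.
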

\begin{proof} We are specially concerned with the Cartan connection and the $h$- and $v$- covariant derivatives  are denoted by ``$|$" and ``$,$" respectively.  The following Bianchi identity of Cartan connection is hold
 \begin{equation}\label{RMS3}
R^{\ h}_{l \ ij,k}+Q^{\ h}_{l \ kr} R^r_{\ ij}+S_{(ij)}\{R^{\ h}_{l \ ir}C^r_{\ jk}+P^{\ h}_{l \ ir}L^r_{\ jk}+P^{\ h}_{l \ jk|i}\}=0,
\end{equation}
where $S_{(ij)}$ means interchange of indices $i$ and $j$ (for more details  see the formula (17.15) page 113 in \cite{Ma}). We take a  contraction for $h$ and $j$ in  (\ref{RMS3}) and get the following
 \begin{eqnarray}\label{RMS4}
\nonumber R_{lk,i}=P^{\ s}_{l \ sr}L^r_{\ ki}-R_{lr}C^r_{\ ki}\!\!\!\!&-&\!\!\!\!\ P^{\ s}_{l\ si|k}+P^{\ r}_{l\ ki}-R^{\ m}_{l \ kr}C^r_{\ mi}\\ \!\!\!\!&-&\!\!\!\!\ P^{\ m}_{l\ kr}L^r_{\ si}+Q^{\ m}_{l\ ir}R^r_{\ mk},
\end{eqnarray}
(see (2.5) in \cite{Shim}). Contracting (\ref{RMS4}) by $y^l$ yields
 \begin{equation}\label{RMS5}
R_{0k,i}=R_{ik}-R_{0r}C^r_{\ ki}-J_{i|k}+L^r_{\ ki}J_r+L^r_{\ ki|r}-R^s_{\ kr}C^r_{\ si}-L^s_{\ kr}L^r_{\ si}.
\end{equation}
For more details see $(2.5)'$ in \cite{Shim}. Since $F$ is a Randers metric, then  $F$ is C-reducible
\begin{equation}\label{RMS6}
C_{ijk}={1\over n+1} \{ I_i h_{jk} + I_j h_{ik} + I_k h_{ij}\}.
\end{equation}
Taking a horizontal derivative of  (\ref{RMS6}), using $h_{ij|k}=0$ and $C_{ijk|s}y^s=L_{ijk}$ we get
\begin{equation}\label{RMS7}
L_{ijk}={1\over n+1}\{ J_i h_{jk} + J_j h_{ik} +J_k h_{ij}\}.
\end{equation}
From (\ref{RMS7}) we derive
\begin{equation}\label{RMS8}
L^r_{\ ki|r}={1\over n+1}\{J^r_{\ |r}h_{ki}+J_{i|k}+J_{k|i}-F^{-1}(J_{i|s}y^s\ell^k+J_{k|s}y^s\ell_i)\}.
\end{equation}
Putting  (\ref{RMS6}), (\ref{RMS7}) and (\ref{RMS8}) in (\ref{RMS5}), after long computations, we have
\begin{eqnarray}\label{RMS9}
\nonumber R_{0k,i}\!\!\!\!&=&\!\!\!\!\ {S_{(ik)}\over n+1} \{J_{i|k}-F^{-1}J_i\ell_k+\frac{(n-3)}{2}J_kJ_i-\frac{1}{n+1}(R_{0k}-F^{-1}R_{00}\ell_k)I_i\}\\
\!\!\!\!&+&\!\!\!\!\ {1\over n+1}(J^r_{\ |r}+(n-1)J^rJ_r-{1\over n+1}I^mR_{0m})h_{ik}+\mathfrak{R}_{ik},
\end{eqnarray}
where
\begin{equation}\label{RMS1}
\mathfrak{R}_{ik}:=R_{ik}-\frac{1}{n+1}\Big(I^mR_{ikm}+I^mR_{mki}+F^{-1}\ell_iI^mR_{m0k}+I_iR_{0k}\Big).
\end{equation}
Multiplying  (\ref{RMS9}) with  $y^k$ yields
\begin{equation}\label{RMS10}
R_{00,i}=R_{i0}+R_{0i}-\frac{1}{n+1}(2I^mR_{m0i}+R_{00}I_i)-J_{i|s}y^s.
\end{equation}
Contracting (\ref{RMS1}) by $y^i$ and $y^k$, we get respectively
 \begin{eqnarray}
\mathfrak{R}_{0i}\!\!\!\!&=&\!\!\!\!\ R_{0i},\label{RMS11}\\
\mathfrak{R}_{i0}\!\!\!\!&=&\!\!\!\!\ R_{i0}-{1\over n+1}(2I^mR_{m0i}+I_iR_{00}).\label{RMS11*}
\end{eqnarray}
By assumption we have $\mathfrak{R}_{i0}=\mathfrak{R}_{0i}=0$. From (\ref{RMS11}) and (\ref{RMS11*}) it follows that  \begin{eqnarray}
R_{0i}\!\!\!\!&=&\!\!\!\!\ 0,\label{RMS12}\\
R_{i0}\!\!\!\!&=&\!\!\!\!\ {2\over n+1}I^mR_{m0i}.\label{RMS12*}
\end{eqnarray}
By considering (\ref{RMS12}) and (\ref{RMS12*}),  the equation  (\ref{RMS10}) reduces to  (\ref{RMS2}).
\end{proof}

\begin{lem}\label{equation}
Let $(M,F)$ be Finsler manifold. Suppose that the mean Landsberg curvature of $F$ satisfy  equation  (\ref{RMS2}). Then for any geodesic c(t) and any parallel vector field V(t) along $c$, the following function
\begin{equation}\label{RMS13}
{\bf I}(t):= {\bf I}_{\dot c}(V(t))
\end{equation}
must be in the following forms
\begin{equation}\label{RMS14}
{\bf I}(t)=t\  {\bf J}(0)+ {\bf I}(0).
 \end{equation}
\end{lem}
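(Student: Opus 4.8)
\emph{Proof proposal.} The plan is to reduce the statement to a trivial first-order ODE along the geodesic $c$. The key observation is a standard compatibility property of the Cartan connection: along a geodesic $c(t)$, the canonical lift $\dot c(t)$ to $TM_0$ is horizontal, so that if $W$ is any section of $\pi^*TM$ (or of $\pi^*T^*M$) which is parallel along $c$, then the ordinary $t$-derivative of a scalar obtained by pairing such parallel objects coincides with the pairing of the $h$-covariant derivative taken in the direction $\dot c$. Applying this to $\mathbf{I}(t)=I_i(\dot c(t))\,V^i(t)$, using that $V(t)$ is parallel and that by definition $J_i=I_{i|s}\,y^s$, I would obtain
\[
\frac{d}{dt}\,\mathbf{I}(t)\;=\;\big(I_{i|s}(\dot c)\,\dot c^{\,s}\big)\,V^i\;=\;J_i(\dot c)\,V^i\;=\;\mathbf{J}_{\dot c}(V).
\]

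Next I would feed in the hypothesis (\ref{RMS2}): the function $t\mapsto \mathbf{J}_{\dot c(t)}(V(t))$ has vanishing derivative, hence is constant and equal to its initial value $\mathbf{J}(0):=\mathbf{J}_{\dot c(0)}(V(0))$. Combined with the display above, this gives $\frac{d}{dt}\mathbf{I}(t)=\mathbf{J}(0)$ for all $t$, i.e. $\mathbf{I}'(t)$ is a constant. Integrating from $0$ to $t$ yields $\mathbf{I}(t)=\mathbf{I}(0)+t\,\mathbf{J}(0)$, which is exactly (\ref{RMS14}).

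The only step that is not purely formal is the first one: the identification of $\frac{d}{dt}\big[\mathbf{I}_{\dot c}(V)\big]$ with the contracted $h$-covariant derivative $I_{i|s}\,y^s V^i$. This is where one genuinely uses that $c$ is a geodesic, so that $\ddot c^{\,i}+2G^i(\dot c)=0$ and hence the lift of $c$ is an integral curve of the spray $\mathbf{G}$, together with the parallelism of $V$; once both facts are in place, the coordinate expression of the Cartan $h$-covariant derivative collapses along $\dot c$ to ordinary differentiation in $t$. Everything else is an elementary integration, and no special property of Randers metrics enters at this stage — the lemma holds for any Finsler manifold whose mean Landsberg curvature satisfies (\ref{RMS2}).
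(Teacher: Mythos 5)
Your proposal is correct and follows essentially the same route as the paper: identify $\mathbf{I}'(t)$ with $\mathbf{J}_{\dot c}(V(t))$ via the $h$-covariant derivative along the geodesic, use the hypothesis (\ref{RMS2}) to get $\mathbf{I}''(t)=0$, and integrate. You merely spell out more explicitly the compatibility argument behind the identification $\mathbf{I}'(t)=\mathbf{J}(t)$, which the paper asserts directly from the definition of $\mathbf{J}_y$.
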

\begin{proof} By assumption we have
\begin{equation}\label{RMS15}
J_{i|l}y^l=I_{i|l|s}y^ly^s=0.
\end{equation}
Let
\begin{equation}\label{RMS16}
{\bf J}(t):={\bf J}_{\dot c}(V(t))
\end{equation}
From our definition of ${\bf J}_y$, we get
\begin{equation}\label{RMS17}
{\bf J}(t)={\bf I}^{'}(t).
\end{equation}
By (\ref{RMS15}) we conclude
\begin{equation}\label{RMS18}
{\bf I}^{''}(t)=0,
\end{equation}
which implies that ${\bf I}^{'}(t)={\bf I}^{'}(0)$. By (\ref{RMS17}), we get (\ref{RMS14}).
\end{proof}

\begin{rem}\label{Remark}
\emph{Let $(M,F)$ be a Finsler space and $c: [a,b]\rightarrow M$ be a
geodesic. For a parallel  vector field $V(t)$ along the geodesic $c$,}
\begin{equation}\label{RMS19}
g_{\dot c}(V(t),V(t))= constant.
\end{equation}
\end{rem}

\bigskip

\noindent {\bf Proof of Theorem \ref{thmRanders}}: To prove Theorem \ref{thmRanders}, take an arbitrary unit vector $y\in T_xM$ and an arbitrary vector $v\in T_xM$. Let $c(t)$ be the geodesic with $\dot c(0)=y$ and $V(t)$ the parallel vector field along $c$ with $V(0)=v$. Define ${\bf I}(t)$ and ${\bf J}(t)$ as in (\ref{RMS13}) and (\ref{RMS16}),
respectively. Then by Lemma \ref{equation}, we have
\begin{equation}\label{RMS20}
{\bf I}(t)=t{\bf J}(0)+{\bf I}(0).
\end{equation}
The norm of mean Cartan torsion at a point $x\in M$ is defined by
 \begin{equation}\label{RMS21}
\|{\bf I}\|_x:=\sup_{0\neq y\in T_xM}\sqrt{I_i(x,y)g^{ij}(x,y)I_i(x,y)}.
\end{equation}
It is known that if $F=\alpha+\beta$ is a Randers metric, then
 \begin{equation}\label{RMS22}
\|{\bf I}\|_x\leq\frac{n+1}{2}\sqrt{1-\sqrt{1-\|\beta\|^2_x}}<\frac{n+1}{\sqrt{2}}.
\end{equation}
See \cite{ShKS} or \cite{Sh1} for a proof. So ${\bf I}_y$ is bounded, i.e., there is a constant $N<\infty$ such that
\begin{equation}\label{RMS23}
||{\bf I}||_x:=\sup_{y\in T_xM_0}\sup_{v\in T_xM}{\frac{{\bf I}_y(v)}{[g_y(v,v)]^{\frac{3}{2}}}\leq N}
\end{equation}
By Remark \ref{Remark},  $Q:=g_{\dot c}(V(t),V(t))= constant$ is positive constant. Thus
\[
|{\bf I}(t)|\leq NQ^{\frac{3}{2}}<\infty,
\]
and ${\bf I}(t)$ is a bounded function on $[0,\infty)$. Using $||{\bf I}||<\infty$ and letting $t\rightarrow +\infty$ or $t\rightarrow -\infty$,  we conclude that
\begin{equation}\label{RMS24}
{\bf J}_y(v)={\bf J}(0)=0.
\end{equation}
Therefore ${\bf J}=0$ and $F$ is a weakly Landsberg metric. From (\ref{RMS7}), we conclude that $F$ is a Landsberg metric.  It is known that $F=\alpha+\beta$ is a Landsberg metric if and only if  is a Berwald metric \cite{M3}. Then $F$ is a Berwald metric.
\qed

\smallskip

\begin{cor}
Let $(M, F)$ be a compact negatively curved Finsler manifold of dimension $n\geq 3$. Suppose that $F$ is of  non-zero scalar flag curvature ${\bf K}$. If\ \  $\mathfrak{R}_{0i}=\mathfrak{R}_{i0}=0$, then $F$ is a Riemannian metric.
\end{cor}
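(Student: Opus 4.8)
The plan is to route the argument through Theorem~\ref{thmRanders} and then invoke a rigidity theorem for metrics of non-zero scalar flag curvature; essentially no new computation is required. (The statement should be read for a Randers metric $F=\alpha+\beta$, since it is Theorem~\ref{thmRanders} that exploits the hypothesis $\mathfrak{R}_{0i}=\mathfrak{R}_{i0}=0$.)

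First: as $M$ is compact, $F$ is geodesically complete by the Finslerian Hopf--Rinow theorem. Thus $F$ is a complete Randers metric with $\mathfrak{R}_{0i}=\mathfrak{R}_{i0}=0$, so Theorem~\ref{thmRanders} shows $F$ is a Berwald metric; in particular $F$ is a Landsberg metric, so ${\bf L}=0$ and hence ${\bf J}=0$.

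Next I would combine ``$F$ is Landsberg'' with the remaining data: $\dim M=n\ge 3$, and $F$ is of scalar flag curvature ${\bf K}$ which is nowhere zero (the force of ``non-zero scalar flag curvature'', while ``negatively curved'' additionally fixes the sign ${\bf K}<0$). By the classical theorem of Numata --- a Landsberg metric of dimension $n\ge 3$ with non-vanishing scalar flag curvature is Riemannian --- we conclude $F$ is Riemannian, as claimed. An equivalent route: a Berwald metric of scalar flag curvature has ${\bf K}={\bf K}(x)$, so Schur's lemma (using $n\ge 3$) makes ${\bf K}$ a non-zero constant, and a compact Finsler manifold of constant negative flag curvature is Riemannian by Akbar-Zadeh's theorem; this version uses each of ``compact'', ``negatively curved'' and $n\ge 3$ explicitly.

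Granting the cited inputs there is no real obstacle; the step carrying the weight is ``$F$ Landsberg, $n\ge 3$, ${\bf K}$ scalar and $\ne 0$ $\Longrightarrow$ $F$ Riemannian''. To make it self-contained one would use the Bianchi identity valid for a metric of scalar flag curvature to express the horizontal derivative of ${\bf K}$ through the mean Landsberg curvature, so that ${\bf J}=0$ makes ${\bf K}$ locally constant, and then run Numata's original argument, which uses $n\ge 3$ to bound the growth of the Cartan (equivalently Matsumoto) tensor along complete geodesics and reach a contradiction unless that tensor vanishes. The hypotheses ``compact'' and ``negatively curved'' otherwise serve only to supply completeness for Theorem~\ref{thmRanders} and to guarantee ${\bf K}\ne 0$.
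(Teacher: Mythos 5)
Your proposal has a genuine gap at the very first step: you declare that ``the statement should be read for a Randers metric $F=\alpha+\beta$'' and then proceed as if $F$ were Randers by hypothesis. But the corollary is stated for an arbitrary compact negatively curved Finsler manifold of scalar flag curvature with $n\ge 3$, and the whole point of that combination of hypotheses is that it \emph{forces} $F$ to be a Randers metric: the paper's proof begins by invoking the theorem of Mo and Shen \cite{MS}, according to which every closed $n$-dimensional ($n\ge 3$) Finsler manifold of negative scalar flag curvature is Randers. Without this step Theorem~\ref{thmRanders} simply does not apply, since that theorem is a statement about Randers metrics only. By rewriting the hypothesis you have proved a weaker statement (the Randers case), not the corollary as written, and you have misassigned the roles of the hypotheses: ``compact'', ``negatively curved'' and ``$n\ge 3$'' are not present merely to supply completeness, the sign of ${\bf K}$, and the dimension needed for Numata or Schur --- they are precisely the hypotheses of the Mo--Shen theorem.

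Once the Randers property is in hand, the remainder of your argument coincides with the paper's: compactness gives geodesic completeness, Theorem~\ref{thmRanders} gives Berwald hence Landsberg, and Numata's theorem (a Landsberg metric of non-zero scalar flag curvature is Riemannian) finishes the proof. Your alternative ending via Schur's lemma and Akbar-Zadeh's rigidity theorem is a reasonable substitute for the Numata step, but neither version repairs the missing reduction to the Randers case.
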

\begin{proof}
In  \cite{MS}, Mo-Shen prove that every closed n-dimensional Finsler manifold of negative  scalar flag curvature  is  a Randers metric $(n\geq 3)$. By Theorem \ref{thmRanders}, $F$ is a Berwald metric. Numata showed that every Landsberg metric of non-zero scalar flag curvature is Riemannian \cite{Nu}. This completes the proof.
\end{proof}

\begin{cor}
Let $(M, F)$  be a complete Randers manifold  with vanishing $\mathfrak{R}$-tensor. Suppose that $F$ is of  non-zero scalar flag curvature. Then $F$ is a Riemannian metric.
\end{cor}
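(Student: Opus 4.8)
The plan is to deduce this directly from Theorem~\ref{thmRanders} together with Numata's classical rigidity theorem, exactly in the spirit of the preceding corollary. First I would observe that the hypothesis that the full $\mathfrak{R}$-tensor vanishes is stronger than what Theorem~\ref{thmRanders} actually requires: contracting $\mathfrak{R}_{ij}=0$ with $y^i$ and with $y^j$ gives $\mathfrak{R}_{0i}=\mathfrak{R}_{i0}=0$. Hence, together with completeness, $F$ satisfies the hypotheses of Theorem~\ref{thmRanders}, and we conclude that $F=\alpha+\beta$ is a Berwald metric.

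Second, I would invoke the standard fact recorded in the Preliminaries that every Berwald metric is a Landsberg metric. Thus ${\bf L}=0$, and in particular $F$ is a (weakly) Landsberg metric. At this stage the Randers structure has done its work, and what remains is a purely Finslerian rigidity statement.

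Third, I would apply Numata's theorem, already used in the proof of the previous corollary: a Landsberg metric of non-zero scalar flag curvature is Riemannian. Since by hypothesis $F$ is of non-zero scalar flag curvature ${\bf K}$, it follows that the Cartan torsion ${\bf C}$ vanishes, i.e. $F$ is a Riemannian metric (equivalently $\beta=0$), which is the desired conclusion.

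I do not expect any substantive obstacle here: the corollary is a formal consequence of Theorem~\ref{thmRanders}, the implication Berwald $\Rightarrow$ Landsberg, and Numata's theorem. The only point requiring care is the same bookkeeping already implicit in the statement of Numata's theorem, namely the precise meaning of ``non-zero scalar flag curvature'' (one wants ${\bf K}$ genuinely nonvanishing, and, depending on the version of Numata's result cited, the restriction $n\ge 3$); these hypotheses are inherited verbatim from the source and introduce no new difficulty.
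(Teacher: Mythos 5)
Your proposal is correct in outline but diverges from the paper in the final rigidity step. You and the paper agree on the first move: vanishing of the full $\mathfrak{R}$-tensor gives $\mathfrak{R}_{0i}=\mathfrak{R}_{i0}=0$ by contraction, so Theorem~\ref{thmRanders} (equivalently, Theorem~\ref{thmRanders2}, which is what the paper actually cites) makes $F$ a Berwald metric. Where you differ is in how to pass from ``Berwald of non-zero scalar flag curvature'' to ``Riemannian'': you go Berwald $\Rightarrow$ Landsberg and then invoke Numata's theorem, whereas the paper invokes Akbar-Zadeh's theorem, namely that every \emph{complete} Berwald metric of non-zero scalar flag curvature with bounded Cartan torsion is Riemannian --- completeness is already a hypothesis and the Cartan torsion of a Randers metric is bounded by (\ref{RMS22}), so all hypotheses are at hand with no dimension restriction. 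Your route via Numata is the one the paper uses for the \emph{previous} corollary, but there the ambient hypothesis $n\ge 3$ is explicitly assumed; Numata's theorem as usually stated (and as needed for the previous corollary) carries that restriction, while the corollary you are proving does not assume it. You flagged this yourself, but as written your argument leaves the case $n=2$ uncovered; either add the restriction $n\ge 3$ or replace Numata by Akbar-Zadeh as the paper does. Apart from that caveat, the two arguments are of comparable length and both are formal consequences of the main theorem plus a classical rigidity result.
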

\begin{proof}
By Theorem \ref{thmRanders2}, $F$ is a Berwald metric. According to Akbar-Zadeh's theorem,  every complete Berwald metric of non-zero scalar flag curvature with bounded Cartan tensor is Riemannian   \cite{AZ}. This completes the proof.
\end{proof}

\section{Proof of Theorem \ref{RS2}.}\setcounter{equation}{0}

\noindent{\bf  Proof of Theorem \ref{RS2}}:
Let $F=\alpha+\beta$ is of scalar flag curvature ${\bf K}$. In this case, it is known that the Ricci tensor $R_{ij}$ is symmetric and we have
\begin{equation}\label{1-1}
R_{ijk}=\frac{F^2}{3}({\bf K}_{,j} h_{ik}-{\bf K}_{,k} H_{ij})+{\bf K}(y_jh_{ik}-y_kh_{ij}),
\end{equation}
where ${\bf K}_{,i}=\frac{\partial {\bf K}}{\partial y^i}$ \cite{AZ}. Suppose that  $F$ be a non-Riemannian Randers metric of scalar flag curvature ${\bf K}$. Then $\mathfrak{R}_{ij}$ is written in the following form
\begin{equation}\label{1-2}
\mathfrak{R}_{ij}=R_{ij}+\frac{F^2}{3(n+1)}({\bf K}h_{ij}+{\bf K}_{,i}I_j+{\bf K}_{,j}I_i)-\frac{I_i}{3}(F^2{\bf K}_{,j}+3{\bf K}y_j).
\end{equation}
Let $\mathfrak{R}_{ij}$ is symmetric. Recall that  ${\bf K}_{,i}y^i=0$.  Since $I_i\neq 0$, then we have ${\bf K}=0$. Conversely, it is easy to see that if  ${\bf K}=0$ then $\mathfrak{R}_{ij}$ is symmetric.
\qed
\bigskip

\begin{cor}
Let $(M,F)$ be a non-Riemannian Randers manifold. Suppose that $F$ is of scalar flag curvature ${\bf K}$. Then ${\bf K}=0$ if and only if $\mathfrak{R}_{i0}=\mathfrak{R}_{0i}$.
\end{cor}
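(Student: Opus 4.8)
The plan is to reduce the corollary to the identity (\ref{1-2}) already obtained in the proof of Theorem \ref{RS2} for a non-Riemannian Randers metric of scalar flag curvature, by tracing it with the flagpole $y$. One implication is immediate: if ${\bf K}=0$ then Theorem \ref{RS2} gives that the whole tensor $\mathfrak{R}_{ij}$ is symmetric, so in particular $\mathfrak{R}_{i0}=\mathfrak{R}_{0i}$. Hence all the content lies in the converse.

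For the converse I would contract (\ref{1-2}) with $y^j$ to compute $\mathfrak{R}_{i0}=\mathfrak{R}_{ij}y^j$, using the orthogonality relations $h_{ij}y^j=0$, $I_jy^j=0$, $y_jy^j=F^2$ and the homogeneity identity ${\bf K}_{,j}y^j=0$ (valid since $F$ has scalar flag curvature). Every term on the right-hand side of (\ref{1-2}) then drops except $R_{ij}y^j=R_{i0}$ and the term $-\tfrac13I_i\bigl(F^2{\bf K}_{,j}+3{\bf K}y_j\bigr)y^j=-F^2{\bf K}I_i$, so that $\mathfrak{R}_{i0}=R_{i0}-F^2{\bf K}I_i$. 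Since the Ricci tensor of a metric of scalar flag curvature is symmetric, $R_{i0}=R_{ij}y^j=R_{ji}y^j=R_{0i}$, and recalling $\mathfrak{R}_{0i}=R_{0i}$ from (\ref{RMS11}) I obtain
\[
\mathfrak{R}_{i0}-\mathfrak{R}_{0i}=-F^2{\bf K}\,I_i .
\]
(Alternatively, the same formula follows from the general identities (\ref{RMS11}) and (\ref{RMS11*}) after substituting (\ref{1-1}): one gets $I^mR_{m0i}=F^2{\bf K}I_i$ and $R_{00}=(n-1)F^2{\bf K}$, and then $\tfrac1{n+1}\bigl(2I^mR_{m0i}+I_iR_{00}\bigr)=F^2{\bf K}I_i$.)

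Finally, since $F=\alpha+\beta$ is non-Riemannian, $\beta\neq 0$ and the mean Cartan torsion does not vanish identically by Deicke's theorem; dividing the displayed relation through by $I_i$ at a direction where it is nonzero shows that $\mathfrak{R}_{i0}=\mathfrak{R}_{0i}$ forces ${\bf K}=0$, which together with the first paragraph proves the equivalence. The argument is entirely mechanical once (\ref{1-2}) is in hand; the only step worth a remark is the passage from ``$F^2{\bf K}I_i\equiv 0$'' to ``${\bf K}\equiv 0$'', which rests on $F$ being non-Riemannian (equivalently ${\bf I}\neq 0$) and on the continuity of ${\bf K}$ to handle the directions where $I_y$ itself happens to vanish.
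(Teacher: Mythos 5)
Your proposal is correct and follows essentially the same route as the paper: contract (\ref{1-2}) with $y^j$ and $y^i$ to get $\mathfrak{R}_{i0}=R_{i0}-{\bf K}F^2I_i$ and $\mathfrak{R}_{0i}=R_{0i}$, invoke the symmetry of the Ricci tensor for metrics of scalar flag curvature, and conclude ${\bf K}F^2I_i=0$, hence ${\bf K}=0$ since $F$ is non-Riemannian. Your treatment is if anything slightly more careful than the paper's, since you spell out the trivial direction and the passage from ${\bf K}I_i\equiv 0$ to ${\bf K}\equiv 0$ at directions where $I_i$ vanishes.
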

\begin{proof}
By (\ref{1-2}) we have
 \begin{equation}
\mathfrak{R}_{i0}=R_{i0}-{\bf K}F^2I_i,\ \ \ \mathfrak{R}_{0i}=R_{0i}.\label{SFC1}
\end{equation}
For a Finsler metrics of scalar flag curvature,  the Ricci tensor is symmetric. Then  by (\ref{SFC1}) we have ${\bf K}F^2I_i=0$. Since $I_i\neq 0$,  then ${\bf K}=0$.
\end{proof}

Akbar Tayebi\\
Faculty  of Science, Department of Mathematics\\
Qom University\\
Qom, Iran.\\
Email:\ akbar.tayebi@gmail.com
\bigskip

\noindent
Esmaeil Peyghan\\
Faculty  of Science, Department of Mathematics\\
Arak University\\
Arak. Iran\\
Email: epeyghan@gmail.com
\end{document}